\documentclass[12pt]{amsart}
\usepackage{amscd}     
\usepackage{amssymb}
\usepackage{amsmath, amsthm, graphics}
\usepackage{xypic}     
\LaTeXdiagrams        
\usepackage[all]{xy}
\xyoption{2cell} \UseAllTwocells \xyoption{frame} \CompileMatrices
\allowdisplaybreaks[3]
\usepackage{amsfonts}
\usepackage[normalem]{ulem}
\usepackage{hyperref}
\usepackage{tikz}
\usepackage{mathtools}
\usepackage{verbatim} 

\usepackage[margin=4cm]{geometry}
\usepackage{bbm}

\usepackage{latexsym}
\usepackage{epsfig}
\usepackage{amsfonts}
\usepackage{enumerate}
\usepackage{times,float}
\usepackage{graphics}

\newtheorem{prop}{Proposition}
\newtheorem{lem}[prop]{Lemma}

\newtheorem{cor}[prop]{Corollary}
\newtheorem{thm}[prop]{Theorem}

\newtheorem{defn}[prop]{Definition}

\theoremstyle{remark}

\theoremstyle{remark}

\numberwithin{equation}{section}

\newcommand{\Mbar}{\overline{\M}}

\newcommand{\M}{\mathcal{M}}

\newcommand{\ev}{{\mathrm{ev}}}

\newcommand{\Hilb}{\mathsf{Hilb}}

\newcommand{\BC}{{\mathbb{C}}}
\newcommand{\BL}{{\mathbb{L}}}
\newcommand{\BQ}{{\mathbb{Q}}}

\def\b1{{\mathbf 1}}

\begin{document}

\title[Universality of $\mathsf{CohFT}$ of $\Hilb_n(S)$]{Universality of $\mathsf{CohFT}$ of $\Hilb_n(S)$}

\author[Denis Nesterov]{Denis Nesterov}
\address{Department of  Mathematics\\ ETH Z\"urich \\ R\"amistrasse 101 \\ 8092 Z\"urich\\ Switzerland}
\email{denis.nesterov@math.ethz.ch}

\author[Hsian-Hua Tseng]{Hsian-Hua Tseng}
\address{Department of Mathematics\\ Ohio State University\\ 231 West 18th Ave.\\ Columbus,  OH 43210\\ USA}
\email{hhtseng@math.ohio-state.edu}

\begin{abstract}
By virtue of their relation to Fulton--MacPherson compactifications, the cohomological field theory of Hilbert schemes of points on a surface, in extremal curve classes, is universally and effectively determined by the characteristic numbers of the surface and by the corresponding theory of Hilbert schemes of the affine plane. As a consequence, we compute the extremal genus 1 Gromov--Witten invariants of Hilbert schemes of all surfaces. 
\end{abstract}

\maketitle

\section{Introduction}
\subsection{Overview}
Let $S$ be a smooth complex quasi-projective surface. For $n\in \mathbb{N}$, denote by
$$\Hilb_n(S)$$
the Hilbert scheme of $n$ points on $S$. In \cite{N1}, using wall-crossing methods, the Gromov--Witten theory of $\Hilb_n(S)$ in extremal\footnote{Also known as exceptional curve classes.} curve classes is shown to be effectively determined by the Gromov--Witten theory of $\Hilb_n(\BC^2)$ and integrals on Fulton--MacPherson compactifications $\mathsf{FM}_n(S)$. It therefore depends universally on the characteristic numbers of the surface $S$, as stated in Corollary~\ref{main}, generalizing the universality of the classical intersection theory established by Ellingsrud, G\"ottsche, and Lehn \cite{EGL}. This holds at the level of Gromov--Witten classes in $H^*(\overline{\mathcal{M}}_{g,k})$, or, in other words, cohomological field theory.

We use the results of \cite{N1}, in particular Theorem~\ref{wall}, together with the calculations of \cite{ILPT} to compute the genus 1 Gromov--Witten invariants of $\Hilb_n(S)$ for all torus-equivariant quasi-projective surfaces with projective torus-fixed loci.

\begin{thm} \label{genus1} 
Let $D=\mathrm{c}_1(\mathcal{O}^{[n]})$ be the boundary divisor of $\Hilb_n(S)$. Consider the generating series\footnote{Here the $n=0,1$ terms vanish because $D=0$ in these cases.} of the genus 1 Gromov--Witten invariants in extremal curve classes,  
\begin{equation}\label{eqn:g1_gen_func}
 F(Q,q):=\sum_{n\geq 0} \Big( \sum_{d\geq 0} \langle D \rangle_{1,d}^{\Hilb_n(S)} q^d\Big) Q^n.
\end{equation}
The series $F(Q,q)$ admits the following expression,
\[
F(Q,q)=-\frac{1}{24} P(Q)^{\int_S\mathrm{c}_2(S)-1}  T(Q,q)\Big( \int_S \mathrm{c}_1(S)^2+\Big(\int_S\mathrm{c}_2(S)\mathrm{c}_1(S)\cdot \int_S\mathrm{c}_1(S)\Big) \log P(Q)\Big), 
\]
where 
\begin{equation}\label{eqn:P_and_T} 
P(Q):=\sum_{n\geq 0} p(n)Q^n, \quad 
T(Q,q):=\sum_{n\geq 0} \Big( \sum_{d\geq 0}\mathrm{Tr}_{n,d} q^d   \Big) Q^n,
\end{equation}
such that $\mathrm{Tr}_{n,d}$ is the trace of the quantum multiplication by the divisor $D$ on $\Hilb_n(\BC^2)$, and $p(n)$ is the number of partitions of $n$. 
\end{thm}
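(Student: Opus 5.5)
The proof has three parts. First, Theorem~\ref{wall} and the universality statement (Corollary~\ref{main}) give a structural form for $F$. Second, the universal factors are pinned down on toric test surfaces, using torus localization and the genus-one computation of \cite{ILPT} for $\Hilb_n(\BC^2)$. Third, the $c_1$-dependence is fixed by a separate argument, which I expect to be the main obstacle.

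\textbf{Step 1: structural form.} By Theorem~\ref{wall}, extremal Gromov--Witten classes of $\Hilb_n(S)$ are expressed through integrals over $\mathsf{FM}_n(S)$ of classes built from the CohFT of $\Hilb_{n_i}(\BC^2)$ placed at the bubbles, twisted by the tangent bundle of $S$.

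The generating series of the genus-one invariants with one insertion of $D$ then has the standard multiplicative shape. On $\mathsf{FM}_n(S)$, a genus-one curve lies over a single cluster of colliding points. All other clusters contribute only genus-zero degree-zero data, i.e. point counts of $\Hilb_m(\BC^2)$ fibers. By Ellingsrud--G\"ottsche--Lehn-type universality (\cite{EGL}, in the form of Corollary~\ref{main}), this yields
\[
F(Q,q)=A(Q,q)^{\int_S c_2}\,B(Q,q)^{\int_S c_1^2}\cdot\Big(\text{a part linear in the genus-one cluster}\Big).
\]
The genus-one cluster contributes a factor linear in local characteristic classes, namely $\int_S c_1^2$, $\int_S c_2$, and the correction involving $c_1$.

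\textbf{Step 2: fixing the universal factors.} The series are determined by evaluating on enough torus-equivariant surfaces with projective fixed loci, e.g. $\BC^2$, $\mathbb{P}^2$, $\mathbb{P}^1\times\mathbb{P}^1$, and $\mathbb{P}^1\times\BC$. For each, equivariant localization reduces the computation to fixed points, where the local model is $\Hilb_m(\BC^2)$ with torus weights.

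Genus-one invariants of $\Hilb_m(\BC^2)$ with a $D$ insertion are computed in \cite{ILPT}. The nonequivariant limit of the genus-one potential involves $-\frac1{24}$ times the trace of quantum multiplication by $D$, via the Dubrovin--Zhang / Getzler genus-one formula for semisimple theories, or more directly as the supertrace. This produces $T(Q,q)$.

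The degree-zero pieces of the other points give $\chi(\Hilb)$ generating functions, namely $P(Q)^{\int c_2}$. Dividing by one copy accounts for the genus-one point, which explains the exponent $\int c_2-1$.

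\textbf{Step 3: the $c_1$-dependence (main obstacle).} The hardest step is matching the terms with $c_1$, i.e. showing that $B=1$ and that the coefficient is exactly $\int c_1^2+(\int c_2 c_1\cdot\int c_1)\log P$. In the equivariant setting this means tracking how the weights at a fixed point enter the genus-one ($\lambda_1$-type) contribution.

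I would first show that the $\lambda_1$ class on $\overline{\mathcal M}_{1,1}$ pairs with the $K_S$-twisted obstruction to give $c_1^2$. The $\log P$ term should then arise from differentiating $P^{\int c_2}$ along the $c_1$-weighted direction. Concretely, I would compute both sides for $\mathbb{P}^2$ and $\mathbb{P}^1\times\mathbb{P}^1$ at low $n$ to determine the universal coefficients. Linear independence of the characteristic-number vectors then concludes the argument.
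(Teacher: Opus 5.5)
There is a genuine gap, and it lies in Step 1, not only in the step you flag.

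\textbf{The missing cross term.} You assume that every cluster not carrying $D$ contributes only a degree-zero point count $p(m)$, and that these assemble into $P(Q)^{\int_S c_2(S)-1}$ with the rest ``linear in the genus-one cluster''. In genus $1$, however, every cluster carries an $H^*(\Mbar_{1,1})$-valued class on the same domain curve.

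For the identity insertion this class has two components. The $[\Mbar_{1,1}]$-component is $p(m)=e(\Hilb_m(\BC^2))$. The $[\mathrm{pt}]$-component is $\langle 1\rangle^{\Hilb_m(\BC^2)}_{1,0}=-\frac{1}{24}\int_{\Hilb_m(\BC^2)}c_{2m-1}(T_{\Hilb_m(\BC^2)})$, coming from the $-\lambda_1 c_{2m-1}$ term of the degree-zero virtual class. It is equivariantly nonzero; in the paper it is $\frac{t_1+t_2}{t_1t_2}A_3$. Likewise the $D$-cluster has two components, $\langle D\rangle^{\Hilb_m(\BC^2)}_{1,d}$ and the fixed-$j$ component $\mathrm{Tr}_{m,d}$.

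Because $[\mathrm{pt}]^2=0$, exactly two kinds of terms survive:
\begin{enumerate}
\item the $[\mathrm{pt}]$-part of the $D$-cluster times Euler characteristics at the other clusters;
\item the trace part of the $D$-cluster times the $[\mathrm{pt}]$-part $\langle 1\rangle_{1,0}$ of one other cluster, with Euler characteristics elsewhere.
\end{enumerate}
The second kind is a two-cluster interaction. It produces the coefficient $\int_S c_2(S)c_1(S)\cdot\int_S c_1(S)-\int_S c_1(S)^2$, a product of two integrals over $S$ with the diagonal removed, which no single cluster can produce. Combined with the factor $(1+\log P)$ in the $[\mathrm{pt}]$-part of the $D$-cluster, it is exactly the source of the final $\log P$ term.

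Your ansatz $A^{\int c_2}B^{\int c_1^2}\cdot(\cdots)$ has no room for this term, and ``differentiating $P^{\int c_2}$ along a $c_1$-direction'' does not supply it. The same omission affects Step 2. At a fixed point of $\mathrm{Sym}_n(S)$ the local model is a product $\prod_p\Hilb_{n_p}(\BC^2)$ with a shared domain curve. You must therefore multiply the full $\Mbar_{1,1}$-valued classes, not numbers.

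\textbf{Step 3 cannot close the argument.} There are three obstructions:
\begin{enumerate}
\item Low-$n$ computations on $\mathbb{P}^2$ and $\mathbb{P}^1\times\mathbb{P}^1$ fix finitely many coefficients, not power series in $Q$ and $q$.
\item Non-equivariantly both surfaces satisfy $\int c_1^2+\int c_2=12$, so they cannot separate these two numbers.
\item The product $\int c_2c_1\cdot\int c_1$ vanishes non-equivariantly, so its coefficient is invisible there.
\end{enumerate}

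\textbf{What the paper does instead.} It inserts the two-component $I$-functions into Theorem~\ref{wall} and splits the sum using $[\mathrm{pt}]^2=0$. It then evaluates the resulting Fulton--MacPherson integrals exactly by the dilaton equation, reducing to $\mathsf{FM}_1(S)=S$ and $\mathsf{FM}_2(S)=\mathrm{Bl}_\Delta(S\times S)$. This yields $\binom{\int c_2-1}{h}\int c_1^2$ and $\binom{\int c_2-2}{h}\bigl(\int c_2c_1\cdot\int c_1-\int c_1^2\bigr)$, which the binomial theorem resums into powers of $P(Q)$.
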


For a projective surface $S$ without a torus action, the formula above admits a simplification, since the integrals 
\[
\int_S\mathrm{c}_2(S)\mathrm{c}_1(S) \cdot \int_S\mathrm{c}_1(S)
\]
vanish. 
\begin{cor} 
Let $S$ be a projective surface without a torus action, then 
\[ 
F(Q,q)=-\frac{1}{24} \int_S \mathrm{c}_1(S)^2 \, P(Q)^{\int_S\mathrm{c}_2(S)-1} \, T(Q,q).
\]
\end{cor}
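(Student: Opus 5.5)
The corollary is the special case of Theorem~\ref{genus1} in which the $\log P(Q)$ term drops out. The plan is to apply Theorem~\ref{genus1} directly and check that the coefficient
\[
\int_S\mathrm{c}_2(S)\mathrm{c}_1(S)\cdot \int_S\mathrm{c}_1(S)
\]
vanishes. Once it does, the bracket in Theorem~\ref{genus1} reduces to $\int_S\mathrm{c}_1(S)^2$, and pulling this scalar to the front gives the stated formula.

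The key step is to read the integrals correctly. In Theorem~\ref{genus1} they are taken in the torus-equivariant sense. For a projective surface, viewed equivariantly with the trivial torus action (or non-equivariantly), integration over $S$ lowers cohomological degree by $4$ and lands in $H^*(\mathrm{pt})=\BQ$. Only classes of degree exactly $4$ then have nonzero integral.

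Now compare degrees. The class $\mathrm{c}_2(S)\mathrm{c}_1(S)$ has degree $6$, and $\mathrm{c}_1(S)$ has degree $2$. In ordinary cohomology both lie above or below degree $4$, so $\int_S\mathrm{c}_2(S)\mathrm{c}_1(S)=0$ and $\int_S\mathrm{c}_1(S)=0$. Either factor vanishing already kills the product. By contrast, $\int_S\mathrm{c}_1(S)^2$ and $\int_S\mathrm{c}_2(S)$ have degree $4$ and are ordinary Chern numbers, so they survive.

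We should also confirm that Theorem~\ref{genus1} applies with the trivial torus. The torus-fixed locus is then $S$ itself, which is projective, as the theorem requires. The only remaining subtlety is interpreting the equivariant integrals as ordinary ones in this case. Since the product $\int_S\mathrm{c}_2(S)\mathrm{c}_1(S)\cdot\int_S\mathrm{c}_1(S)$ is homogeneous of total degree $0$ in the equivariant parameters, it specializes to the non-equivariant value $0$. This step needs no further argument.
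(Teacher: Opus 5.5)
Your proposal is correct and matches the paper's own (one-line) argument: for projective non-equivariant $S$, the integrals $\int_S\mathrm{c}_2(S)\mathrm{c}_1(S)$ and $\int_S\mathrm{c}_1(S)$ vanish for degree reasons, so the $\log P(Q)$ term in Theorem~\ref{genus1} drops out. Your closing remark about total degree $0$ is not a valid reason on its own, since a degree-$0$ quantity need not vanish; it is also unnecessary, because $\int_S\mathrm{c}_1(S)$ lies in $H^{-2}_A(\mathrm{pt})=0$ for compact $S$.
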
	

This agrees with the $n=2,3$ computations in \cite{HQ} and solves \cite[Conjecture 1.3]{HQ} in an explicit form. 

\subsection{Classical intersection numbers}
A {\em universality principle} asserts that tautological intersection numbers on $\Hilb_n(S)$ are given by expressions depending on $S$ only through characteristic numbers. A basic example are the intersection numbers of the form
\begin{equation}\label{eqn:classical_integrals}
\int_{\Hilb_n(S)}P(\{c_i(L^{[n]})\}, \{c_j(T_{\Hilb_n(S)})\}),    
\end{equation}
where $P$ is a homogeneous polynomial of degree $2n$, and 
\begin{equation}
L^{[n]}:=(p_2)_*(p_1^*L\otimes \mathcal{O}_{\mathcal{Z}})    
\end{equation}
is the rank $n$ tautological vector bundle constructed from a line bundle $L\to S$ using the universal subscheme 
\begin{equation*}
\xymatrix{
\mathcal{Z}\ar@{^{(}->}[r] & S\times \Hilb_n(S) \ar[r]^(0.6){p_2}\ar[d]^{p_1} & \Hilb_n(S)\\
\, & S. & \,
}
\end{equation*}
By \cite{EGL}, assuming $S$ is projective and non-equivariant, (\ref{eqn:classical_integrals}) are given by expressions that depend on $S$ and $L$ only through the characteristic numbers 
\[
\int_S \mathrm{c}_1(L)^2, \quad \int_S \mathrm{c}_1(L)\mathrm{c}_1(S), \quad \int_S\mathrm{c}_1(S)^2, \quad \text{and} \quad \int_S\mathrm{c}_2(S). 
\]
Explicit examples include Euler characteristics \cite{G},
\begin{equation*}
\sum_{n\geq 0}e(\Hilb_n(S))Q^n=\frac{1}{\prod_{m\geq 1}(1-Q^m)^{\int_S\mathrm{c}_2(S)}},    
\end{equation*}
and Segre integrals \cite{Lehn}, \cite{V}, \cite{MOP1}, \cite{MOP2}.

\subsection{Extremal Gromov-Witten theory}
In this work, we are interested in the universality of extremal Gromov-Witten cohomological field theory of $\Hilb_n(S)$. The curve classes contracted by the Hilbert-Chow morphism
\begin{equation}
\rho^{\text{HC}}: \Hilb_n(S)\to \text{Sym}_n(S)    
\end{equation}
is generated by an effective curve class $\beta_n\in H_2(\Hilb_n(S),\mathbb{Z})$,
\begin{equation}
\mathbb{Z}\cdot \beta_n=\text{ker}(\rho^{\text{HC}}_*)\subseteq H_2(\Hilb_n(S),\mathbb{Z}).    
\end{equation}
The moduli space $\overline{\mathcal{M}}_{g,k}(\Hilb_n(S),d)$ of $k$-pointed, genus $g$, degree $d\beta_n$ stable maps to $\Hilb_n(S)$ has virtual dimension
\begin{equation*}
(1-g)\cdot (2n-3)+\int_{d\beta_n}\mathrm{c}_1(T_{\Hilb_n(S)})+k=(1-g)(2n-3)+k.
\end{equation*}
Extremal Gromov-Witten invariants of $\Hilb_n(S)$ are of the following form
\begin{equation*}\label{eqn:ext_GW_inv}
\langle \gamma_1,...,\gamma_k\rangle_{g,d}^{\Hilb_n(S)}:=\int_{[\overline{\mathcal{M}}_{g,k}(\Hilb_n(S),d)]^{\text{vir}}}\prod_{i=1}^k\ev_i^*\gamma_i,\quad \gamma_1,...,\gamma_k\in H^*(\Hilb_n(S)).    
\end{equation*}
More generally, the extremal cohomological field theory is assembled from the Gromov--Witten classes
\begin{equation}\label{eqn:ext_coh}
\mathrm{GW}(\gamma_1,...,\gamma_k)_{g,d}^{\Hilb_n(S)}:=q_*\Big(\prod_{i=1}^k\ev_i^*\gamma_i \cap  [\overline{\mathcal{M}}_{g,k}(\Hilb_n(S),d)]^{\text{vir}}\Big) \in H^*(\Mbar_{g,n}),
\end{equation}
where
\begin{equation*}
q \colon  \overline{\mathcal{M}}_{g,k}(\Hilb_n(S),d) \rightarrow  \overline{\mathcal{M}}_{g,k}
\end{equation*}
is the natural morphism that forgets the maps.

The theory extends to a quasi-projective surface $S$ equipped with an action of a torus $A$. In this case, $\gamma_i \in H_A^*(\Hilb_n(S))$ are $A$-equivariant classes, while the resulting invariants are defined via torus localization and take values in the localized equivariant cohomology ring $H^*(\Mbar_{g,n})[a_1^{\pm},\ldots,a_m^{\pm}]$, with $H_A^*(\text{pt})=\mathbb{Z}[a_1,...,a_m]$. In fact, the equivariant Gromov--Witten theory of $\Hilb_n(\BC^2)$ is fundamental for determining even the non-equivariant theory of $\Hilb_n(S)$ for a projective surface $S$, as stated in Theorem~\ref{wall}.

When $S$ is projective and non-equivariant, the dimension constraint implies that (\ref{eqn:ext_coh}) is possibly nonzero only when $g=0,1$. The classical intersection theory is recovered by the degree 0 invariants $\langle \gamma_1,\gamma_2,\gamma_3\rangle_{0,0}^{\Hilb_n(S)}$.

%We also consider the cases when $S$ is quasiprojective and equipped with a suitable torus action. Examples of this are $\mathbb{C}^2$ and minimal resolutions of $ADE$ surface singularities.

\subsection{Previous work}
\subsubsection{Quasi-projective case}
The work \cite{op} is fundamental in the study of Gromov--Witten theory of Hilbert schemes of points on smooth surfaces. Since $H_2(\Hilb_n(\mathbb{C}^2), \mathbb{Z})=\text{ker}(\rho^{\text{HC}}_*)$, all Gromov--Witten invariants of $\Hilb_n(\mathbb{C}^2)$ are extremal. In \cite{op}, the (equivariant) genus 0 Gromov--Witten theory of $\Hilb_n(\mathbb{C}^2)$ is fully and effectively solved. The key result is the explicit calculation of the operator $\mathsf{M}^{\Hilb_n(\mathbb{C}^2)}_D(q)$ of small quantum multiplication by the divisor
\begin{equation}
D=\mathrm{c}_1(\mathcal{O}^{[n]}).    
\end{equation}
It is proved in \cite{op} that all genus $0$ Gromov-Witten invariants of $\Hilb_n(\mathbb{C}^2)$ can be explicitly and effectively reconstructed from $\mathsf{M}^{\Hilb_n(\mathbb{C}^2)}_D(q)$ by WDVV equations.

The quantum multiplication by the divisor $D$ can be considered for any surface $S$. In \cite{MOb}, the operator\footnote{In fact, \cite{MOb} explicitly determines all divisor operators and beyond extremal degrees.} $\mathsf{M}_D^{\Hilb_n(S)}(q)$ is explicitly calculated for $S$ the minimal resolutions of $ADE$ surfaces singularities equipped with suitable torus actions. Analogous reconstructions of all genus $0$ Gromov-Witten invariants in this case can be achieved, assuming a nondegeneracy conjecture (see \cite[Section 6.3]{MOb}).

The genus $1$ Gromov--Witten theory of $\Hilb_n(\mathbb{C}^2)$ is studied in \cite{ILPT}. Here, the key role is played by the generating series
\begin{equation}
\langle D\rangle_1^{\Hilb_n(\mathbb{C}^2)}=\sum_{d\geq 0}\langle D\rangle_{1,d}^{\Hilb_n(\mathbb{C}^2)}q^d,    
\end{equation}
which is explicitly evaluated by calculating certain family Hodge integrals which are proven to be equivalent to Gromov--Witten theory of $\Hilb_n(\mathbb{C}^2)$ in \cite{pt}. Reconstructions of other genus $1$ Gromov--Witten invariants are also obtained.

A parallel study of genus $1$ Gromov--Witten invariants of $\Hilb_n(A_r)$ in all degrees will be available in the forthcoming work \cite{ILPTY}.

\subsubsection{Projective case}
For $S$ projective {\em and} simply connected, universality results of extremal Gromov--Witten invariants of $\Hilb_n(S)$ have been obtained by a combination of techniques including cosection localization: for genus $0$, $2$-pointed extremal invariants of $\Hilb_n(S)$, this is proven in \cite{ll}; for genus $0$, $3$-pointed extremal invariants of $\Hilb_n(S)$, this is proven in \cite{lq}; for genus $1$ extremal invariants of $\Hilb_2(S)$, this is proven in \cite{hlq}; for genus $1$ extremal invariants of $\Hilb_3(S)$, this is proven in \cite{HQ}. Universality of genus $1$ extremal invariants of $\Hilb_n(S)$ is formulated in \cite[Conjecture 1.3]{HQ}.

\subsection{Acknowledgment}
H.-H. T. thanks R. Pandharipande for helpful discussions. H.-H. T. is grateful to J. Hu and Z. Qin for discussions about their work \cite{HQ}. D. N. was supported by a Hermann-Weyl-instructorship at the Forschungsinstitut
f\"ur Mathematik at ETH Z\"urich.

\section{Universality of the extremal cohomological field theory}

\subsection{$I$-functions}\label{Iuniversal}  
Let $T=\BC^*\times \BC^*$ be the 2-torus acting on $\BC^2$ with weights $t_1$ and $t_2$. There is an induced action on the space of maps of degree $d$ to $\Hilb_n(\BC^2)$, 
%\[ \xymatrix{  \Mbar_{g,k}(\Hilb_n(\BC^2),d).} \ar@(dl,ul)[]^{T} \]
\[  T \curvearrowright \Mbar_{g,k}( \Hilb_n(\BC^2),d). \]
Consider a smooth complex quasi-projective surface $S$ with the action of another torus $A$, 
%\[ \xymatrix{ S, \ar@(dl,ul)[]^{A}}\]
\[ A \curvearrowright S, \]
such that the torus-fixed locus $S^A$ is projective. Denote its $A$-equivariant Chern roots by $\alpha_1(S)$ and $\alpha_2(S)$. All moduli spaces and cohomologies associated to $S$ will be treated $A$-equivariantly. The intersection numbers are assumed to be defined by the torus localisation if $S$ is not projective. 
\begin{defn}\label{I-function} 
Let 
\[
\{\mu^1(\gamma), \hdots, \mu^k(\gamma)\}
\]
be a collection of partitions of $n$ weighted by cohomology classes $\gamma_{j,i} \in H^*_A(S)$, 
\[ 
\mu^j (\gamma)=((\mu^j_{1},\gamma_{j,1}), \hdots, (\mu^j_{\ell(\mu^j)},\gamma_{j,\ell(\mu^j)})).
\]
We define the $I$-function associated to the Gromov--Witten theory of $\Hilb_n(S)$, 
\[ 
I_{n,g,d}\Bigg(\prod^{k}_{j=1} \mu^j(\gamma),z \Bigg) \in H^*_A(S\times \Mbar_{g,k} )[\![z^\pm]\!], 
\]
as the class
\[
t_1t_2  \cdot \prod_{j,i} \gamma_{j,i}  \cdot q_*\left( \frac{\prod^k_{j=1} \ev^*_j\mu^j \cap 
[\Mbar_{g,k}( \Hilb_n(\BC^2), d)^{T}]^\mathrm{vir}}{\mathrm{e}_{T}(N^\mathrm{vir})} \right),
\]
such that the variable substitution
\[
t_i \mapsto z+ \alpha_i(S), \quad i=1,2,
\]
is made, and $q \colon \Mbar_{g,k}( \Hilb_n(\BC^2),d) \rightarrow \Mbar_{g,k}$ is the natural projection. Here, $z$ is a formal variable, and the insertions are the Nakajima--Grojnowski classes\footnote{More precisely, the insertion $\mu^j$ is given by the closure of the locus of subschemes with the multiplicity profile $\mu^j$.} associated to the partitions $\mu^j$. The expression is Taylor expanded\footnote{For example, $1/(z-a)=1/z+a/z^2+a^2/z^3+\hdots$} in the range $|z|>|a|>1$, where $a$ is an element in $H^*_A(S \times \Mbar_{g,k})$.
\end{defn}

Observe that, by the properness of the fixed locus for all tori $\BC^* \subset T$ other than the coordinate tori, the Gromov--Witten classes of $\Hilb_n(\BC^2)$ are Laurent polynomials; that is, they are contained in $H^*(\Mbar_{g,k})[t_1^\pm,t_2^\pm]$. Moreover, these Laurent polynomials are symmetric in $t_i$. In particular, the Taylor expansion of an $I$-function in the above definition does not require inverting elements in $H^*_A(S \times \Mbar_{g,k})$, and the terms involving Chern roots are expressible in terms of Chern classes. 

The $I$-function is of the form
\[
C_rz^r+C_{r-1}z^{r-1}+\hdots,
\]
i.e., there are finitely many terms with non-negative powers of $z$. Since $H^*_A(S \times \Mbar_{g,k})$ is not nilpotent in general, it may contain infinitely many terms with negative powers of $z$; however, they do not play a role in Theorem \ref{wall}.

\subsection{Fulton--MacPherson compactifications}
Let $\mathsf{FM}_{n}(S)$ denote the ordered Fulton--MacPherson compactification of $n$ points, which is constructed as an iterated blow-up of $S^n$ in \cite{FM}, 
\[ 
\pi \colon \mathsf{FM}_{n}(S) \rightarrow S^n.
\]
For a subset $I \subseteq \{1,\ldots, n\}$, such that $|I| \geq 2$, the boundary divisor $D_I \in H_A^*(\mathsf{FM}_{n}(S))$ is the exceptional divisor of the blow-up along the diagonal in $S^n$ where the points labeled by $I$ coincide (or, to be more precise, its proper transform with respect to the previous blow-ups). 
Let 
\[ 
\Psi_i=\sum_{I \text{ with } i\in I}D_I, \quad i=1,\dots,n, 
\]
be the $\Psi$-classes associated to $\mathsf{FM}_{n}(S)$. By \cite{N2}, $\Psi_i$ can be expressed in terms of rank 2 vector bundles $\BL_i$ given by the cotangent spaces at the marked points, 
\[
\Psi_i=\frac{1}{2}(\mathrm{c}_1(\BL_i)+\pi_i^*\mathrm{c}_1(S)),
\]
such that $\pi_i$ is the composition of $\pi$ with the projection to the $i$-th factor of $S^n$,
\begin{equation} \label{1}
\pi_i \colon  \mathsf{FM}_{n}(S) \rightarrow S^n \rightarrow S. 
\end{equation}

To state the wall-crossing formula, Theorem \ref{wall}, we will need the moduli spaces of degree $0$ maps to $\mathsf{FM}_{n}(S)$,
\[
\mathsf{FM}_{n}(S) \times \Mbar_{g,k},
\]
equipped with the trivial obstruction theory (i.e., we will integrate over the standard fundamental class). Let 
\[ 
\pi_i 
\colon  \mathsf{FM}_{n}(S) \times \Mbar_{g,k}  \rightarrow S \times \Mbar_{g,k} 
\]
be the evaluation morphisms defined by taking products of the forgetful morphisms (\ref{1}) with the identity morphism on the second factor. 

Denote by 
\[ 
H^*_A(\text{pt})_{\text{loc}}=\BQ[a_1^\pm, \hdots, a_m^\pm]
\]
the localized $A$-equivariant cohomology of a point, where $a_i$ are the weights of $A$.  
\begin{defn}
We define the Fulton--MacPherson integrals
\[ 
\langle  \Psi^{h_1}_1 \gamma_1 \cdot \hdots \cdot \Psi^{h_n}_n \gamma_n\rangle_{g,k}^{\mathsf{FM}_n (S)} := \int_{[\mathsf{FM}_{n}(S)]} \prod^{n}_{i=1} \Psi_i^{h_i}\pi_i^* \gamma_i \in H^*(\Mbar_{g,k})[a_1^\pm, \hdots, a_m^\pm],
\]
where $\gamma_i \in H_A^*(S \times  \Mbar_{g,k})$. For the integrals on $\mathsf{FM}_{n}(S)$ (without the  $\Mbar_{g,k}$-factor), we will use the same notation but without the subscripts, 
\[ 
\langle  \Psi^{h_1}_1 \gamma_1\cdot \hdots \cdot \Psi^{h_n}_n \gamma_n\rangle^{\mathsf{FM}_n (S)} 
:= \int_{[ \mathsf{FM}_{n}(S)]} \prod^{n}_{i=1} \Psi_i^{h_i}\pi_i^* \gamma_i \in [a_1^\pm, \hdots, a_m^\pm],
\]
where $\gamma_i \in H_A^*(S)$. 
\end{defn}
Dilaton, string, and divisor equations (see \cite{N2} for the first two) give an effective way to compute the integrals of $\Psi$-classes on $\mathsf{FM}_{n}(S)$. 

\subsection{Wall-crossing} The main result of \cite{N1} implies the following expression for Gromov--Witten classes of $\Hilb_n(S)$ in extremal curves classes in terms of $I$-functions and Fulton--MacPherson integrals.
\begin{thm}[\cite{N1}]  \label{wall} 
The following relation holds in $H^*(\Mbar_{g,k})[a_1^\pm, \hdots, a_m^\pm]$, 
\[
\mathrm{GW}(\mu^1(\gamma), \hdots, \mu^k(\gamma) )^{\Hilb_n(S)}_{g,d}= \sum_{\underline{n}, \underline{d}, \underline{\mu} } \Bigg\langle  \prod^{h}_{i=1}I_{n_i,g,d_i}\Bigg(\prod^{k}_{j=1} \mu^{j,i}(\gamma),-\Psi_i\Bigg) \Bigg\rangle_{g,k}^{\mathsf{FM}_h(S)}\Big / h!,
\]
where the sum is taken over all partitions of $n$,
\[
(n_1, \hdots, n_h),
\]
partitions of $d$, 
\[ 
(d_1, \hdots, d_h),
\]
and decompositions of $\mu^j$ into subpartitions 
\[ 
\mu^j=\mu^{j,1} \sqcup \hdots \sqcup \mu^{j,h}.
\]
The parts $n_i$ are strictly positive, $d_i$ can be zero, and $|\mu^{j,i}|=n_i$. The negative powers of $\Psi_i$ are set to be $0$. 
\end{thm}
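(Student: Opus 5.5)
The plan is to prove Theorem~\ref{wall} by a wall-crossing argument in the spirit of Zhou's proof of quasimap wall-crossing. The two chambers are the stable maps to $\Hilb_n(S)$ in extremal classes and a degenerate theory in which the extremal degree is carried by punctual bubbles sitting over points of $S$. On the degenerate side we will work with the universal family of subschemes. A map $C\to\Hilb_n(S)$ of degree $d\beta_n$ composes with Hilbert--Chow to a map $C\to\mathrm{Sym}_n(S)$ of degree zero, hence a constant. So a stable map is the datum of:
\begin{itemize}
\item a point of $\mathrm{Sym}_n(S)$;
\item a map to the product of punctual Hilbert schemes over the support.
\end{itemize}
We will introduce a one-parameter family of $\epsilon$-stability conditions on pairs (curve, family of length-$n$ subschemes of $S$ over it). In these conditions, sub-configurations of degree $\le 1/\epsilon$ concentrated at a point of $S$ are allowed to collide. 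The ordinary Fulton--MacPherson degenerations of $S$ then replace the bubbling of $C$ as $\epsilon\to 0$. At $\epsilon=0^+$, the resulting moduli space splits as $\mathsf{FM}_h(S)\times\Mbar_{g,k}$ decorated by local punctual data. At $\epsilon=\infty$ it is $\Mbar_{g,k}(\Hilb_n(S),d)$.

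The key steps, in order, are as follows.
\begin{enumerate}
\item Construct the $\epsilon$-stable moduli spaces with perfect obstruction theories compatible with $q$ to $\Mbar_{g,k}$. Show that they change only at the walls $\epsilon=1/d_0$.
\item For each wall, build a master space, namely a $\BC^*$-equivariant space over $\mathbb{P}^1$ whose two fixed ends are the moduli spaces on either side.
\item Apply virtual localization. The difference of the two sides equals a sum over fixed loci where the parametrized cluster is entangled. Each such locus is a fibration over a smaller moduli space, with fibre a $T$-fixed locus of $\Mbar_{g,k}(\Hilb_{n_i}(\BC^2),d_i)$.
\item Identify the normal bundle contributions. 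The tangent space of $S$ at the cluster point globalizes the $T$-weights, which gives the substitution $t_i\mapsto z+\alpha_i(S)$. The $\BC^*$-smoothing weight of the bubble attached at the $i$-th FM point is $-\Psi_i$, using $\Psi_i=\frac12(\mathrm{c}_1(\BL_i)+\pi_i^*\mathrm{c}_1(S))$ from \cite{N2}. The factor $t_1t_2$ and the $\gamma_{j,i}$ come from the diagonal class and the distribution of the Nakajima insertions $\mu^{j}=\sqcup_i\mu^{j,i}$ among the clusters.
\item Take residues in $z$ and sum over walls. This telescopes to the stated formula: an integral over $\mathsf{FM}_h(S)\times\Mbar_{g,k}$ of a product of $I$-functions, with the factor $1/h!$ coming from the unordering of clusters. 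Only nonnegative powers of $z$ survive, which is why negative powers of $\Psi_i$ are set to $0$.
\end{enumerate}

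The main obstacle is step 1 together with the normal bundle identification in step 4. Here one has to show that the degeneration of the target $S$ by Fulton--MacPherson bubbles (with the $\BC^2$ bubbles carrying the $T$-action by scaling) interacts with the obstruction theory so that the fixed-locus contribution is literally the equivariant $\Hilb_{n_i}(\BC^2)$ class twisted by $T_S$.

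For step 4 I would first check the case $h=1$, where there is a single cluster and $S=\BC^2$ with $A=T$. There the formula must reduce to a tautology via the string/dilaton behavior of $I$. I would then use the relative (expanded-target) description of $\mathsf{FM}_n(S)$ as iterated blow-ups to globalize.
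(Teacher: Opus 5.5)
The paper does not prove this statement; it quotes it from \cite{N1}. The broad shape of your outline is the right kind of argument: a master space whose $\BC^*$ scales a Fulton--MacPherson bubble of $S$, fixed loci built from $T$-fixed loci of $\Mbar_{g,k}(\Hilb_{n_i}(\BC^2),d_i)$, and $t_i\mapsto z+\alpha_i(S)$ followed by $z\mapsto-\Psi_i$ with truncation.

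The genuine gap is the stability parameter. Your walls $\epsilon=1/d_0$, with FM bubbles of $S$ ``replacing the bubbling of $C$'', are Zhou's walls in the curve degree, and they cannot produce this formula, for two reasons.
\begin{enumerate}
\item The $I$-functions are genus $g$, $k$-pointed classes, as your own step 3 says. Since $C\to\Hilb_n(S)\to\mathrm{Sym}_n(S)$ is constant, the \emph{whole} curve maps into every cluster, and $\prod_i I_{n_i,g,d_i}$ is a cup product in $H^*(\Mbar_{g,k})$. So no cluster's degree sits on a tail of $C$ that a curve-degree wall could detach.
\item The formula contains nontrivial terms with $d_i=0$ and $n_i\ge 2$. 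For $d=0$ your family has no walls at all, yet the statement is then a genuine comparison of $\Hilb_n(S)\times\Mbar_{g,k}$ (obstruction bundle $\mathbb{E}^\vee\boxtimes T_{\Hilb_n(S)}$) with Fulton--MacPherson integrals. For instance, for $n=2$, $g=1$, $d=0$ and insertion $1$, the $H^0(\Mbar_{1,1})$-component of the identity is
\[
e(\Hilb_2(S))=\tfrac12\langle\widetilde{\Psi}_1\widetilde{\Psi}_2\rangle^{\mathsf{FM}_2(S)}+2\int_S\mathrm{c}_2(S)=\tfrac12 e(e-1)+2e,\qquad e=\int_S\mathrm{c}_2(S).
\]
This equals G\"ottsche's value $e(e+3)/2$ only because of the degree-zero cluster term $\int_S I_{2,1,0}$, i.e.\ the $p(2)$ of Lemma~\ref{evaluating}.
\end{enumerate}

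What the formula calls for is a parameter bounding the \emph{length} of the part of the subscheme allowed at one point of $S$, a weighted Fulton--MacPherson condition. Clusters of length at most $1/\epsilon$ stay in the Hilbert scheme of the current bubble; longer collisions are separated by FM bubbles. Roughly:
\begin{itemize}
\item $\epsilon\le 1/n$ gives $\Mbar_{g,k}(\Hilb_n(S),d)$.
\item $\epsilon>1/2$ gives reduced points on FM degenerations with constant maps; this accounts for the terms with all $(n_i,d_i)=(1,0)$.
\item The walls are at $\epsilon=1/m$ for $2\le m\le n$.
\end{itemize}
At the wall $1/m$, the $\BC^*$ scales a bubble $\mathbb{P}(T_xS\oplus\BC)$ carrying a length-$m$ cluster, into whose punctual Hilbert scheme the entire curve maps. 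The fixed loci therefore produce $I_{m,g,d'}$ for every $d'\ge 0$ at once; the factor $t_1t_2$ removes the translation direction of $\Hilb_m(\BC^2)$. The replaced cluster becomes a point of weight $m\epsilon>1$, hence heavy at all later walls. This is what lets the walls telescope to integrals over $\mathsf{FM}_h(S)$ with the factor $1/h!$.

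Two smaller points. By your own rule (collisions of size $\le 1/\epsilon$ allowed), $\epsilon\to 0^+$ is the Hilbert-scheme end, so your chamber labels are swapped. And for $S=\BC^2$ the $h=1$ term alone already equals the left-hand side. Your proposed check is therefore not a tautology about $I$; it is the nontrivial cancellation of all $h\ge 2$ terms.
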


By the discussion following Definition \ref{I-function}, after substituting the $\Psi$-classes, $I$-functions have finitely many terms; that is, inserting them is well-defined. 
 
\begin{cor} \label{main} 
The Gromov--Witten classes 
\[
\mathrm{GW}(\mu^1(\gamma), \hdots, \mu^k(\gamma) )^{\Hilb_n(S)}_{g,d} \in H^*(\Mbar_{g,k})[a_1^\pm, \hdots, a_m^\pm]
\]
depend universally on a finite collection of the characteristic numbers
\[ 
\int_S \prod_{i \in I, j\in J} \gamma_{i,j} \cdot \mathrm{c}_1(S)^{k_1} \cdot \mathrm{c}_2(S)^{k_2} \in \BQ[a_1^\pm, \hdots, a_m^\pm],
\]
where $I\subseteq \{1,\hdots, k\}$ and $J\subseteq \{1,\hdots, \ell(\mu^i)\}$, and $0 \leq k_1, k_2<\infty$.

More precisely, for fixed $n,g$, $d$, and $\{\mu^1, \hdots, \mu^k\}$, there exists a $S$-indepedent polynomial with finitely many variables,
\[
P_{\mu^1, \hdots, \mu^k}(x_1,x_2, x_3,\hdots) \in H^*(\Mbar_{g,k})[x_1, x_2, x_3, \hdots],
\]
such that 
\[ 
\mathrm{GW}(\mu^1(\gamma), \hdots, \mu^k(\gamma))^{\Hilb_n(S)}_{g,d}=P_{\mu^1, \hdots, \mu^k}\Big(\int_S\mathrm{c}_1(S), \int_S\mathrm{c}_1(S)^2,\int_S\mathrm{c}_2(S), \hdots\Big)
.\]
\end{cor}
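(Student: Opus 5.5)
Start from Theorem~\ref{wall}, which writes $\mathrm{GW}(\mu^1(\gamma),\hdots,\mu^k(\gamma))^{\Hilb_n(S)}_{g,d}$ as a finite sum, indexed by $\underline{n},\underline{d},\underline{\mu}$, of Fulton--MacPherson integrals of products of $I$-functions. Since $n,d$ and the $\mu^j$ are fixed, the index set is finite and does not depend on $S$. It is therefore enough to show that each summand
\[
\Bigg\langle \prod_{i=1}^h I_{n_i,g,d_i}\Big(\prod_j \mu^{j,i}(\gamma),-\Psi_i\Big)\Bigg\rangle^{\mathsf{FM}_h(S)}_{g,k}
\]
is a universal polynomial in the characteristic numbers of the stated shape, with coefficients in $H^*(\Mbar_{g,k})$.

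\textbf{Step 1: the $I$-functions.} By the discussion after Definition~\ref{I-function}, the $\Hilb_n(\BC^2)$ Gromov--Witten classes entering $I_{n_i,g,d_i}$ are Laurent polynomials in $t_1,t_2$, symmetric in $t_1,t_2$, with coefficients in $H^*(\Mbar_{g,k})$. These coefficients do not depend on $S$. Substitute $t_l\mapsto z+\alpha_l(S)$ and set $z=-\Psi_i$. Only finitely many terms survive, because negative powers of $\Psi_i$ are discarded and only finitely many nonnegative powers of $z$ occur. By symmetry, each surviving term is a polynomial in $\Psi_i$, $\mathrm{c}_1(S)$, $\mathrm{c}_2(S)$ (pulled back via $\pi_i$) and the weights $\gamma_{j,\cdot}$, with $S$-independent coefficients in $H^*(\Mbar_{g,k})$. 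Hence the summand becomes a finite $H^*(\Mbar_{g,k})$-linear combination of integrals
\[
\int_{\mathsf{FM}_h(S)} \prod_{i=1}^h \Psi_i^{a_i}\,\pi_i^*\Big(\mathrm{c}_1(S)^{b_i}\mathrm{c}_2(S)^{c_i}\prod_{j}\gamma_{j,\cdot}\Big),
\]
with universal coefficients and a finite, $S$-independent set of exponents.

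\textbf{Step 2: the Fulton--MacPherson integrals.} We show these $\Psi$-integrals on $\mathsf{FM}_h(S)$ are universal polynomials in integrals $\int_S(\text{product of the }\pi_i\text{-classes})\,\mathrm{c}_1(S)^{k_1}\mathrm{c}_2(S)^{k_2}$. I would argue by induction on $h$ and on the total $\Psi$-degree.

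The first tool is the string, dilaton and divisor equations from \cite{N2}. These relate integrals over $\mathsf{FM}_h(S)$ to integrals over $\mathsf{FM}_{h-1}(S)$ via the forgetful map, whenever some point carries no $\Psi$-power or a low one. Their correction terms involve only $\mathrm{c}_1(S)$, $\mathrm{c}_2(S)$ and diagonal restrictions of the $\gamma$'s, which merges classes onto a single point.

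More robustly, one can use $\Psi_i=\sum_{I\ni i}D_I$ and the structure of each boundary divisor $D_I$. The divisor $D_I$ is a bundle over $\mathsf{FM}_{h-|I|+1}(S)$ whose fibre is the (weighted-projective) Fulton--MacPherson space of the tangent plane. Its normal bundle and the restrictions of the $\Psi$-classes are expressed through $\BL$ and $T_S$, so fibre integration yields universal polynomials in $\mathrm{c}_1(S)$ and $\mathrm{c}_2(S)$ of the collision point. Once all $\Psi$'s are removed, the integral $\int_{\mathsf{FM}_h(S)}\prod\pi_i^*(\cdot)$ equals $\int_{S^h}$ of the same classes, which factors into a product of integrals over $S$.

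Collecting terms produces the polynomial $P_{\mu^1,\hdots,\mu^k}$. In the equivariant setting all steps are the same, read via localization, with values in $\BQ[a_1^\pm,\hdots,a_m^\pm]$.

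\textbf{Main obstacle.} The difficult step is Step~2, namely proving rigorously that the $\Psi$-integrals on $\mathsf{FM}_h(S)$ are universal. Specifically, one has to check that the self-intersections of the boundary divisors, and the fibre integrals over the local Fulton--MacPherson spaces of $T_pS$, depend only on the Chern classes of $T_S$. I would try the string/dilaton/divisor recursion first. If that does not close up, I would fall back on the iterated blow-up description of \cite{FM}, where each blow-up centre is a diagonal with normal bundle built from $T_S$.
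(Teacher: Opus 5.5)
Your proposal is correct and follows the paper's argument: Step~1 is exactly the paper's reduction of each summand of Theorem~\ref{wall} to an $S$-independent polynomial, with coefficients in $H^*(\Mbar_{g,k})$, in the symbols $\prod_\ell\pi_\ell^*\big(\prod\gamma_{i,j}\,\mathrm{c}_1(S)^{k_1}\mathrm{c}_2(S)^{k_2}\big)\Psi_\ell^{k_\ell}$, and your fallback in Step~2 is precisely how the paper concludes: iterated blow-ups of $S^h$ along diagonals with normal bundles built from $T_S$, so that $\Psi$-powers push forward to $S$-independent Chern-class expressions on diagonals $\cong S^{h'}$. You should make that blow-up argument the main line rather than the fallback, since the string/dilaton/divisor recursion only applies when some point carries an insertion of low $\Psi$-degree, and in the equivariant setting where the corollary is stated nothing forces this (e.g.\ $\int_{\mathsf{FM}_3(S)}\Psi_1^{r_1}\Psi_2^{r_2}\Psi_3^{r_3}$ with all $r_i\geq 3$), so the recursion does not by itself cover the full statement.
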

\begin{proof}
The summation in Theorem \ref{wall} is independent of $S$, so are the Gromov--Witten classes of $\Hilb_n(\BC^2)$. Hence, before integrating classes on $\mathsf{FM}_h(S)$, the formula in Theorem \ref{wall} is a $S$-independent polynomial valued in $H^*(\Mbar_{g,k})$, whose variables are the formal symbols
\begin{equation} \label{eq1}
\prod^h_{\ell=1}\pi^*_\ell\Big(\prod_{i \in I_\ell, j\in J_\ell} \gamma_{i,j} \cdot \mathrm{c}_1(S)^{k_{1,\ell}} \cdot \mathrm{c}_2(S)^{k_{2,\ell}} \Big) \cdot \Psi_\ell^{k_\ell}.
\end{equation}
 
The Fulton--MacPherson spaces $\mathsf{FM}_h(S)$ are iterated blow-ups of $S^h$ at the diagonals, such that the description of the blow-up centers is independent of $S$. The classes $\Psi_\ell$ are sums of the exceptional divisors. The normal bundles of the diagonals are box sums of the tangent bundle of $S$. Hence, the classes (\ref{eq1}) can be iteratively pushed forward to $S^h$, replacing $\Psi^{k_\ell}_\ell$ by some $S$-independent polynomials involving multiples of Chern classes of $S$ on the diagonals of $S^h$. The diagonals are isomorphic to $S^{h'}$ for $h'\leq h$. We conclude that (\ref{eq1}) is expressible in terms of $S$-independent polynomials of classes on $S^{h'}$ of the form
\begin{equation*}
\prod^{h'}_{\ell=1}\pi^*_\ell\Big(\prod_{i \in I'_\ell, j\in J'_\ell} \gamma_{i,j} \cdot \mathrm{c}_1(S)^{k'_{1,\ell}} \cdot \mathrm{c}_2(S)^{k'_{2,\ell}} \Big),
\end{equation*}
where $\pi_\ell \colon S^{h'} \rightarrow S$ is now the projection to the $i$-th factor. By integrating these classes, we obtain that the formula in Theorem \ref{wall} is a $S$-independent polynomial expression in the characteristic numbers. 
\end{proof} 
\section{Genus 1 invariants}

\subsection{Evaluating the $I$-functions for genus $1$ invariants}	 
Recall that, in Nakajima basis, 
\[ 
D:=-\frac{1}{2}((2,1_S),(1,1_S)^{n-2}).
\]
We will use Theorem \ref{wall} to compute
\[
\langle D \rangle^{\Hilb_n(S)}_{1,d}. 
\]
The $I$-functions appearing in the wall-crossing formula for these invariants are of two types. If the part $\{2\}$ is present in a partition  $\mu^{j,i}$, then $\mu^{j,i}=D$, if not, then $\mu^{j,i}=1_{\Hilb_{n_i}(\BC^2)}$. Hence, the $I$-functions take the following form,  
\[
I_{n,1,d}(D,z), \quad  I_{n,1,d}(1,z). 
\]
In the next lemma, we evaluate them. 

\begin{lem} \label{evaluating}  
After making the substitution
\[
t_i \mapsto z+\alpha_i(S),
\]
and taking the Taylor expansion in $z$, we have the following identities in $H_A^*(S \times \Mbar_{1,1})[\![z^{\pm}]\!]$, 
\begin{align*}
I_{n,1,d}(D,z)&=t_1t_2\,\langle D \rangle_{1,d}^{\Hilb_n(\BC^2)} \cdot  [\mathrm{pt}]+t_1t_2\,\mathrm{Tr}_{n,d}\cdot [\Mbar_{1,1}], \\ 
I_{n,1,0}(1,z)&= t_1t_2\,  \langle 1 \rangle_{1,0}^{\Hilb_n(\BC^2)}\cdot [\mathrm{pt}]+ t_1t_2\, p(n)\cdot[\Mbar_{1,1}],
\end{align*} 
where $\mathrm{Tr}_{n,d}$ is the (non-reduced) trace of the quantum multiplication by $D$, and $p(n)$ is the number of partitions of $n$. For $d\neq 0$, 
\[
I_{n,1,d}(1,z)=0. 
\]
\end{lem}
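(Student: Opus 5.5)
We evaluate the $I$-functions by expanding the $T$-equivariant genus $1$ Gromov--Witten classes of $\Hilb_n(\BC^2)$ with one insertion, as elements of $H^*(\Mbar_{1,1})[t_1^\pm,t_2^\pm]$. We then apply the substitution $t_i\mapsto z+\alpha_i(S)$. By the remark after Definition~\ref{I-function}, the localized expression $q_*\big(\ev_1^*\mu \cap [\,\cdot\,^T]^{\mathrm{vir}}/\mathrm{e}_T(N^{\mathrm{vir}})\big)$ equals the equivariant Gromov--Witten class $\mathrm{GW}(\mu)_{1,d}^{\Hilb_n(\BC^2)}$, which is a Laurent polynomial in $t_1,t_2$. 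Since $H^*(\Mbar_{1,1})$ has basis $[\Mbar_{1,1}]$ and $[\mathrm{pt}]$, we decompose
\[
\mathrm{GW}(\mu)_{1,d}=A_{\mu,d}\,[\Mbar_{1,1}]+B_{\mu,d}\,[\mathrm{pt}],
\]
with $A,B\in\BQ[t_1^\pm,t_2^\pm]$. We identify $B_{\mu,d}$ as $\langle\mu\rangle_{1,d}$, the integral against the degree-$0$ part, after suitable normalization of $[\mathrm{pt}]$. We identify $A_{\mu,d}$ by restricting to a point of $\Mbar_{1,1}$, i.e.\ fixing a generic elliptic curve $E$ (or equivalently pairing with $\psi_1$ or $\lambda_1$ via the Poincaré dual, $\int_{\Mbar_{1,1}}[\mathrm{pt}]$-coefficient).

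The key step is a dimension count. The virtual dimension of $\Mbar_{1,1}(\Hilb_n(\BC^2),d)$ is $1$, so $\mathrm{GW}(\mu)_{1,d}$ has total degree $\deg\mu-1+1=\deg \mu$ in the grading where $t_i$ have degree $1$. For $\mu=D$ (degree $1$), $A_{D,d}$ has $t$-degree $1$ and $B_{D,d}$ has $t$-degree $0$. For $\mu=1$, $A_{1,d}$ has $t$-degree $0$ and $B_{1,d}$ has $t$-degree $-1$.

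We would argue that $A_{\mu,d}$ is in fact a constant, equal to the genus-$1$ invariant with a fixed complex structure. This is the trace: the fixed-$j$ genus-$1$ invariant with one insertion $\gamma$ equals $\mathrm{Str}$ of quantum multiplication by $\gamma$. For $\gamma=D$ this gives $\mathrm{Tr}_{n,d}$; for $\gamma=1$ it gives $\dim H^*(\Hilb_n(\BC^2))=p(n)$ at $d=0$ and $0$ for $d>0$. The case $\gamma=1$, $d>0$ also follows from the string equation / forgetting the point, since the classes pull back from $\Mbar_{1,0}$-type spaces of negative dimension. A degree-$1$ polynomial $A_{D,d}$ could a priori be nonzero. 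However, the degeneration to the nodal rational curve (or the standard genus-$1$ TRR/Getzler relation $[\mathrm{pt}]=\frac{1}{2}\delta_{\mathrm{irr}}$-type identity) shows that $A$ equals a genus-$0$ two-point contraction $\sum_a\langle\gamma,e_a,e^a\rangle_{0,d}$. This contraction is the trace of $\gamma\star$, which is a non-equivariant number by properness of the $T$-fixed locus and symmetry. Since all odd cohomology vanishes, this is a genuine trace. Similarly, $B_{D,d}$ is a constant $\langle D\rangle_{1,d}$, and $B_{1,0}=\langle 1\rangle_{1,0}$.

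Finally, we multiply by $t_1t_2$, substitute $t_i\mapsto z+\alpha_i(S)$, and observe that no further expansion is needed. The coefficients $\mathrm{Tr}_{n,d}$, $p(n)$, $\langle D\rangle_{1,d}$ are independent of $t$, except for $B_{1,0}$, whose $t_1t_2B_{1,0}$ is a degree-$1$ symmetric Laurent polynomial, expanded as stated. The vanishing $I_{n,1,d}(1,z)=0$ for $d\neq0$ follows from the fundamental class axiom: forgetting the marked point lands in $\Mbar_{1,0}(\Hilb_n(\BC^2),d)$. The main obstacle is justifying that the $[\Mbar_{1,1}]$-coefficient is exactly the ($t$-independent) trace. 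We would try to do this via the boundary relation on $\Mbar_{1,1}$ expressing a point as a nodal curve and the splitting axiom, reducing it to the genus-$0$ quantum product.
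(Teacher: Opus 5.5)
The core of your argument is the paper's proof. You expand in the basis $[\mathrm{pt}],[\Mbar_{1,1}]$ of $H_*(\Mbar_{1,1})$ and read off the $[\mathrm{pt}]$-coefficient as $\langle\mu\rangle_{1,d}$. You read off the $[\Mbar_{1,1}]$-coefficient as the fixed-elliptic-curve invariant and identify it with the trace of $\mu\star$; your nodal degeneration plus splitting argument is a reasonable justification of what the paper simply asserts. You then get $p(n)$ as the trace of the identity (equivalently $e(\Hilb_n(\BC^2))$) and use the string equation for $\mu=1$, $d\neq 0$.

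The genuine error is your claim that $\mathrm{Tr}_{n,d}$ and $\langle D\rangle_{1,d}$ are $t$-independent numbers, so that $t_i\mapsto z+\alpha_i(S)$ only acts on the prefactor $t_1t_2$. Your own degree count refutes this. $A_{D,d}$ is homogeneous of $t$-degree $1$, so a $t$-independent $A_{D,d}$ would be $0$. Equivalently, the trace of the degree-raising operator $D\star$ is nonzero only as an equivariant quantity.

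Your argument for constancy also does not work. Properness of the fixed loci of the non-coordinate subtori of $T$ only shows that these classes are symmetric Laurent polynomials in $t_1,t_2$ (the remark after Definition~\ref{I-function}). Since $\Hilb_n(\BC^2)$ is not proper, even the $t$-degree-$0$ quantity $\langle D\rangle_{1,d}$ need not be a number. Indeed, by (\ref{dependence}), $\mathrm{Tr}_{n,d}=(t_1+t_2)A_2$ and $\langle D\rangle_{1,d}=\frac{(t_1+t_2)^2}{t_1t_2}A_1$.

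The identities of the lemma are correct only when these coefficients are read as equivariant Laurent polynomials, with the substitution carried out inside them as well. This is exactly what later produces the factors $(2z+\mathrm{c}_1(S))^2$ and $(2z+\mathrm{c}_1(S))(z^2+z\mathrm{c}_1(S)+\mathrm{c}_2(S))$ in $I_1$. Under your reading, $I_{n,1,d}(D,z)$ would instead be $(z+\alpha_1(S))(z+\alpha_2(S))$ times constants, which is wrong and would break the subsequent computation. So drop the constancy claim and keep the $t$-dependence; with that change your argument proves the lemma.

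A minor point: for $\mu=1$, $d\neq 0$, the space $\Mbar_{1,0}(\Hilb_n(\BC^2),d)$ has virtual dimension $0$, not negative. The vanishing holds because the virtual class is pulled back along the forgetful map, and $q$ is constant on its fibers.
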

\begin{proof} 
Since 
\[
H_*(\Mbar_{1,1})=\BQ[\mathrm{pt}] \oplus  \BQ [\Mbar_{1,1}], 
\]
to evaluate the $I$-function, we need to compute the genus 1 Gromov--Witten invariants of $\Hilb_n(\BC^2)$ with extra insertions given by $q^*[\mathrm{pt}]^\vee=q^*[\Mbar_{1,1}]$ and $q^*[\Mbar_{1,1}]^\vee=q^*[\mathrm{pt}]$, where $q \colon \Mbar_{g,k}(\Hilb_n(\BC^2),d) \rightarrow \Mbar_{g,k}$ is the natural projection. The former insertion corresponds to 
\[
\langle D \rangle_{1,d}^{\Hilb_n(\BC^2)},
\] 
while the latter gives the genus $1$ invariant of a fixed elliptic curve 
\[
\langle D \rangle_{(E,0_E),d}^{\Hilb_n(\BC^2)},
\] 
which in turn is equal to the trace of the quantum multiplication by the divisor $D$. This establishes the first identity.  
 
By the same reasoning, the second $I$-function for $d=0$ can be evaluated as follows,   
\[
I_{n,1,0}(1,z)= t_1t_2\langle 1 \rangle_{1,0}^{\Hilb_n(\BC^2)} \cdot [\mathrm{pt}]+ t_1t_2p(n)\cdot[\Mbar_{1,1}],
\]
where we used that 
\[ 
\langle 1 \rangle_{(E,0_E),0}^{\Hilb_n(\BC^2)}=e(\Hilb_n(\BC^2) )=p(n).  
\]
For $d\neq0$, the invariants with the identity insertion $1$ vanish by the string equation. 
\end{proof} 

To evaluate the wall-crossing formula, we need to determine the $t_i$-dependence of the Gromov--Witten invariants of $\Hilb_n(\mathbb{C}^2)$ that appear in the $I$-functions. By \cite{ILPT}, it is very simple (and independent of $n$), 
\begin{align} \label{dependence}
\begin{split}
\langle D \rangle_{1,d}^{\Hilb_n(\BC^2)}&=\frac{(t_1+t_2)^2}{t_1t_2} \cdot A_1\\
\mathrm{Tr}_{n,d}&=(t_1+t_2)\cdot A_2\\
\langle 1 \rangle_{1,0}^{\Hilb_n(\BC^2)}&=\frac{t_1+t_2}{t_1t_2} \cdot A_3\\
p(n)&=A_4,
\end{split}
\end{align}
where $A_i$ are rational numbers.

Define the generating series of $I$-functions, 
\begin{align*} 
I_{1}(Q,q)&:=\sum_{n\geq 1}\Big(\sum_{d\geq0} I_{n,1,d}(D,z)q^d\Big)Q^n\\
I_{2}(Q)&:=\sum_{n\geq 1} I_{n,1,0}(1,z)Q^n.
\end{align*}
By Lemma \ref{evaluating} and (\ref{dependence}), we have
\begin{align*} 
I_{1}(Q,q,z)&=A_1(Q,q)(2z+\mathrm{c}_1(S))^2 \boxtimes [\mathrm{pt}]  
+A_2(Q,q) (2z+\mathrm{c}_1(S))(z^2+z\mathrm{c}_1(S)+\mathrm{c}_2(S)) \boxtimes [\Mbar_{1,1}],\\
I_{2}(Q,z)&=A_3(Q) (2z+\mathrm{c}_1(S)) \boxtimes [\mathrm{pt}]
+A_4(Q)(z^2+z\mathrm{c}_1(S)+\mathrm{c}_2(S)) \boxtimes[\Mbar_{1,1}],
\end{align*}
which, by \cite{ILPT}, admit the following expressions 
\begin{align*}
A_1(Q,q)&=-\frac{1}{24} P(Q)\cdot (1+\log P(Q))\cdot T(Q,q), &A_2=T(Q,q),\\
A_3(Q)&=-\frac{1}{24}  P(Q) \cdot \log P(Q),  &A_4(Q)=P(Q)-1,
\end{align*}
such that 
\begin{equation*} 
P(Q):=\sum_{n\geq 0} p(n) Q^n, \quad 
T(Q,q):=\sum_{n\geq 0} \Big( \sum_{d\geq 0}\mathrm{Tr}_{n,d} q^d   \Big) Q^n,
\end{equation*}
as in (\ref{eqn:P_and_T}).
 
Observe that in the $I$-functions we do not consider the $n=0$ terms, since in Theorem~\ref{wall} the parts $n_i$ must be strictly positive. All the series above have a vanishing $Q^0$ term except for $P(Q)$ (hence the subtraction of $1$).
We are now ready to plug this into the formula from Theorem \ref{wall}. 
\subsection{Evaluating the wall-crossing formula} 
As in (\ref{eqn:g1_gen_func}), consider the generating series of the genus $1$ invariants with the insertion $D$, 
\begin{align*}
F(Q,q):=\sum_{n\geq 0} \Big( \sum_{d\geq 0} \langle D \rangle_{1,d}^{\Hilb_n(S)} q^d\Big) Q^n.
\end{align*}
Since there is only one insertion $D$, the $I$-function $I_{1}(Q,q,z)$ can be inserted only once. By choosing to insert $I_1(Q,q,z)$ via the first marking on $\mathsf{FM}_h(S)$, Theorem \ref{wall} gives the following formula on the level of generating series, 
\begin{equation} \label{formula}
F(Q,q) = \sum_{h\geq 0} \Big\langle I_1(Q,q,-\Psi_1) \cdot I_2(Q,-\Psi_2) \cdot \hdots \cdot I_2(Q,-\Psi_{h+1}) \Big\rangle_{1,1}^{\mathsf{FM}_{h+1}(S)}\big/h!.
\end{equation}
Moreover, since 
\[ 
[\mathrm{pt}] \cdot [\mathrm{pt}]=0
\]
in $H^*(\Mbar_{1,1})$, at most one summand with the class $[\mathrm{pt}]$ in either $I_1(Q,q)$ or $I_2(Q)$ can contribute. Hence, by integrating out the classes on the $\Mbar_{1,1}$-factor, the sum (\ref{formula}) splits as follows,
\begin{equation} \label{formula2}
\begin{split}
&\sum_{h\geq 0} A_1(Q,q)A_4(Q)^h\Big\langle (-2\Psi_1+\mathrm{c}_1(S))^2 \cdot  \prod^{h+1}_{i=2} \widetilde{\Psi}_i  \Big\rangle^{\mathsf{FM}_{h+1}(S)}\big/h!  \\
+&\sum_{h\geq 0} A_2(Q,q)A_3(Q)A_4(Q)^{h}\Big\langle  (-2\Psi_1+\mathrm{c}_1(S)) \widetilde{\Psi}_1 \cdot (-2\Psi_2+\mathrm{c}_1(S))\cdot  \prod^{h+2}_{i=3} \widetilde{\Psi}_i \Big\rangle^{\mathsf{FM}_{h+2}(S)}\big/h!,
\end{split}
\end{equation}
where the integrals now take place on $\mathsf{FM}_{h}(S)$ (without the $\Mbar_{1,1}$-factor), and
\[ 
\widetilde{\Psi}_i:=\Psi_i^2-\mathrm{c}_1(S)\Psi_i+\mathrm{c}_2(S).
\]
It remains to evaluate the Fulton--MacPherson integrals in the formula above.

\begin{prop} 
We have 
\begin{align*} 
\Big\langle (-2\Psi_1+\mathrm{c}_1(S))^2 \cdot  \prod^{h+1}_{i=2} \widetilde{\Psi}_i  \Big\rangle^{\mathsf{FM}_{h+1}(S)}\big/h! &=\int_S\mathrm{c}_1(S)^2 \binom{\int_S\mathrm{c}_2(S)-1}{h}, \\
\Big\langle  (-2\Psi_1+\mathrm{c}_1(S)) \widetilde{\Psi}_1 \cdot (-2\Psi_2+\mathrm{c}_1(S)) 
\cdot  \prod^{h+2}_{i=3} \widetilde{\Psi}_i \Big\rangle^{\mathsf{FM}_{h+2}(S)}\big/h!&= \\
\Big(\int_S\mathrm{c}_2(S)\mathrm{c}_1(S)\cdot \int_S\mathrm{c}_1(S)-&\int_S\mathrm{c}_1(S)^2\Big)\binom{\int_S\mathrm{c}_2(S)-2}{h}.
\end{align*}
\end{prop}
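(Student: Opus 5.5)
The plan is to prove a dilaton-type equation for the class $\widetilde{\Psi}_i$ on Fulton--MacPherson spaces. Using it, I will forget the $h$ marked points carrying $\widetilde{\Psi}$ one at a time. Each forgotten point should produce a factor $\int_S \mathrm{c}_2(S) - m$, where $m$ is the number of points that remain. This reduces the two integrals to explicit computations on $\mathsf{FM}_1(S)=S$ and $\mathsf{FM}_2(S)=\mathrm{Bl}_\Delta(S\times S)$.

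First I would observe that, with $\alpha_1,\alpha_2$ the Chern roots of $T_S$,
\[
\widetilde{\Psi}_i=(\alpha_1-\Psi_i)(\alpha_2-\Psi_i)=\mathrm{c}_2(\pi_i^*T_S\otimes \mathcal{O}(-\Psi_i)).
\]
I would then show that this class behaves like $\psi_{n+1}$ in the dilaton equation on $\Mbar_{g,n+1}$. Let $p\colon \mathsf{FM}_{m+1}(S)\to \mathsf{FM}_m(S)$ forget the last point. The fibre over a generic configuration is $S$ blown up at $m$ points, and $\Psi_{m+1}$ restricts to $E_1+\dots+E_m$. Since $\mathrm{c}_1(S)\cdot E_j=0$ and $E_j^2=-1$, we get
\[
p_*\widetilde{\Psi}_{m+1}=\int_S\mathrm{c}_2(S)-m.
\]
The correction to worry about is the comparison $\Psi_i=p^*\Psi_i+D_{\{i,m+1\}}$ for $i\le m$, together with the analogous corrections to $\widetilde{\Psi}_i$ and to the $\pi_i^*$-classes. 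I would aim to show that $\widetilde{\Psi}_{m+1}\cdot D_I=0$ for every $I\ni m+1$. Geometrically, on $D_I$ the point $m+1$ lies in a bubble, and the tangent direction of $S$ twisted by the relative cotangent line should admit a nowhere-vanishing section there. Equivalently, the restriction of $T_S\otimes\mathcal{O}(-\Psi_{m+1})$ to $D_I$ contains a trivial summand, so its top Chern class vanishes. Granting this, every class pulled back along $p$ may be replaced by its pullback, and the projection formula gives
\[
\Big\langle \prod_{i\le m}(\cdots)\cdot \widetilde{\Psi}_{m+1}\Big\rangle^{\mathsf{FM}_{m+1}(S)}=\Big(\int_S\mathrm{c}_2(S)-m\Big)\Big\langle \prod_{i\le m}(\cdots)\Big\rangle^{\mathsf{FM}_{m}(S)}.
\]
This vanishing statement is the step I expect to be the main obstacle. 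I would try to deduce it from the description of $\BL_i$ and $\Psi_i$ in \cite{N2}, possibly by checking it first on the deepest boundary strata.

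Iterating the equation on the first integral, forgetting points $h+1,\dots,2$ produces the factor $(e-1)(e-2)\cdots(e-h)$, where $e=\int_S\mathrm{c}_2(S)$. Dividing by $h!$ gives $\binom{e-1}{h}$. What remains is $\int_S(-2\Psi_1+\mathrm{c}_1(S))^2$ on $\mathsf{FM}_1(S)=S$, where $\Psi_1=0$, so it equals $\int_S\mathrm{c}_1(S)^2$.

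For the second integral, forgetting points $h+2,\dots,3$ gives $(e-2)\cdots(e-1-h)/h!=\binom{e-2}{h}$. We are left with an integral on $\mathrm{Bl}_\Delta(S\times S)$, where $\Psi_1=\Psi_2=E$ is the exceptional divisor. I would expand
\[
(-2E+\mathrm{c}_1^{(1)})(E^2-\mathrm{c}_1^{(1)}E+\mathrm{c}_2^{(1)})(-2E+\mathrm{c}_1^{(2)}).
\]
The $E$-free term gives $\int_S\mathrm{c}_2(S)\mathrm{c}_1(S)\cdot\int_S\mathrm{c}_1(S)$. For the terms containing $E$, I would use that $E=\mathbb{P}(T_S)$ over $\Delta\cong S$ and $E|_E=-h$, where $h$ is the tautological hyperplane class. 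Pushing forward with $\int_{\mathbb{P}(T_S)}h^{k+1}=s_k(T_S)$ (Segre classes), only the combinations of $\mathrm{c}_1^2$ and $\mathrm{c}_2$ survive. I expect these to sum to $-\int_S\mathrm{c}_1(S)^2$, which would give the claimed formula. This last step is a routine but sign-sensitive check.
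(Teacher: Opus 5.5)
Your route is the paper's: iterate the dilaton equation, which the paper simply cites from \cite{N2}, to reduce to $\mathsf{FM}_1(S)=S$ and $\mathsf{FM}_2(S)=\mathrm{Bl}_\Delta(S\times S)$. Your expected outcome of the sign-sensitive check is right. The $E$-terms contribute $-4\int_S(\mathrm{c}_1^2-\mathrm{c}_2)+8\int_S\mathrm{c}_1^2-\int_S(4\mathrm{c}_2+5\mathrm{c}_1^2)=-\int_S\mathrm{c}_1(S)^2$. The paper avoids this expansion by using $E\cdot\widetilde{\Psi}_1=0$, which collapses the integrand to $\pi_1^*\mathrm{c}_1(S)\,\pi_2^*\mathrm{c}_1(S)\,\widetilde{\Psi}_1$.

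One step in your sketch of the dilaton equation is false as stated. The vanishing $\widetilde{\Psi}_{m+1}\cdot D_I=0$ fails for $|I|\geq 3$, just as $\psi_{n+1}\cdot D_I\neq 0$ on $\Mbar_{g,n+1}$ for $|I|\geq 3$.

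\emph{Counterexample.} On $\mathsf{FM}_3(S)$, the fibre $F$ of $D_{\{1,2,3\}}$ over a point of the small diagonal is $\mathbb{P}^3$ blown up along three disjoint lines. Let $H$ be the pulled-back hyperplane class and $E_{13},E_{23}$ the exceptional divisors over the lines where points $1,3$ (resp.\ $2,3$) coincide. Then $\Psi_3|_F=-H+E_{13}+E_{23}$ and $\widetilde{\Psi}_3|_F=(\Psi_3|_F)^2$. Since $H^2E_{ij}=0$, $E_{13}E_{23}=0$ and $HE_{ij}^2=-1$, you get $\int_F H\cdot\widetilde{\Psi}_3|_F=1-1-1=-1\neq 0$. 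So when $m+1$ shares a screen with two or more other points, there is no canonical nowhere-vanishing tangent vector.

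\emph{Fix.} You only need $I=\{i,m+1\}$, because these are the only divisors appearing in $\Psi_i=p^*\Psi_i+D_{\{i,m+1\}}$. Any other $D_K$ meeting $D_{\{i,m+1\}}$ must satisfy $K\supsetneq\{i,m+1\}$ or $K\cap\{i,m+1\}=\emptyset$, by the nesting condition. Hence on $D_{\{i,m+1\}}$ the point $m+1$ always lies on an innermost screen containing only $i$ and $m+1$. The vector from $i$ to $m+1$ on that screen is defined up to the screen's scaling, i.e.\ as a section of $\pi_{m+1}^*T_S\otimes\mathcal{O}(-\Psi_{m+1})$, and it is nowhere zero. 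For $m=1$ this is exactly the relation $\Psi_1\widetilde{\Psi}_1=0$ that the paper uses. With the vanishing restricted to these divisors, your argument goes through.
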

\begin{proof}
The higher-dimensional dilaton equation from \cite{N2} implies 
\begin{equation} \label{FMintegral}
\Big\langle (-2\Psi_1+\mathrm{c}_1(S))^2 \cdot  \prod^{h+1}_{i=2} \widetilde{\Psi}_i  \Big\rangle^{\mathsf{FM}_{h+1}(S)}\big/ h!= \Big(\int_S\mathrm{c}_2(S)-h\Big)	\Big\langle (-2\Psi_1+\mathrm{c}_1(S))^2 \cdot  \prod^{h}_{i=2} \widetilde{\Psi}_i  \Big\rangle^{\mathsf{FM}_{h}(S)}\big/h!.
\end{equation}
By induction, we obtain 
\[ 
(\ref{FMintegral})=\Big\langle (-2\Psi_1+\mathrm{c}_1(S))^2  \Big\rangle^{\mathsf{FM}_{1}(S) }  \binom{\mathrm{c}_2(S)-1}{h}.
\]
The class $\Psi_1$ vanishes on $\mathsf{FM}_{1}(S)=S$, hence, 
\[ 
(\ref{FMintegral})=\int_S\mathrm{c}_1(S)^2 \cdot  \binom{\int_S\mathrm{c}_2(S)-1}{h}. 
\]
	 
Similarly, by inductively applying the dilaton equation, we have  
\begin{multline*}
\Big\langle  (-2\Psi_1+\mathrm{c}_1(S)) \widetilde{\Psi}_1 \cdot (-2\Psi_2+\mathrm{c}_1(S)) 
	 \cdot  \prod^{h+2}_{i=3} \widetilde{\Psi}_i \Big\rangle^{\mathsf{FM}_{h+2}(S)}\big/h! \\
=\Big\langle  (-2\Psi_1+\mathrm{c}_1(S)) \widetilde{\Psi}_1 \cdot (-2\Psi_2+\mathrm{c}_1(S))  \Big\rangle^{\mathsf{FM}_2(S)}\binom{\int_S\mathrm{c}_2(S)-2}{h} .
\end{multline*}
For two points, we have $\mathsf{FM}_2(S)=\mathrm{Bl}_\Delta(S\times S)$, such that $\Psi_1=\Psi_2$ is the class of the exceptional divisor. Using the the blow-up relation 
\[ 
\Psi_1^3-\Psi_1^2\mathrm{c}_1(S)+\Psi_1\mathrm{c}_2(S)=0,
\]
we obtain 
\begin{align*}
&\int_{\mathrm{Bl}_\Delta(S\times S)}(-2\Psi_1+\pi_1^*\mathrm{c}_1(S))\cdot (-2\Psi_1+\pi_2^*\mathrm{c}_1(S)) 
\cdot \widetilde{\Psi}_1 )\\	
=&\int_{\mathrm{Bl}_\Delta(S\times S)}\pi_1^* \mathrm{c}_1(S) \cdot \pi_2^*\mathrm{c}_1(S) \cdot  \widetilde{\Psi}_1  
=\Big(\int_S\mathrm{c}_2(S)\mathrm{c}_1(S)\cdot \int_S\mathrm{c}_1(S)-\int_S\mathrm{c}_1(S)^2\Big),
\end{align*}
where we used the fact that $\pi_1=\pi_2$ over the diagonal $\Delta \subset S\times S$. 
\end{proof}

\begin{proof}[Proof of Theorem \ref{genus1}] By plugging the values of the Fulton--MacPherson integrals into formula~(\ref{formula2}), we obtain
\begin{multline*} 
(\ref{formula2}) = \sum_{h \geq 0} A_1(Q,q) A_4(Q)^h  \int_S\mathrm{c}_1(S)^2 \cdot \binom{\int_S\mathrm{c}_2(S)-1}{h} \\
+ \sum_{h \geq 0} A_2(Q,q) A_3(Q) A_4(Q)^h \Big(\int_S\mathrm{c}_2(S)\mathrm{c}_1(S)\cdot \int_S\mathrm{c}_1(S)-\int_S\mathrm{c}_1(S)^2\Big)\binom{\int_S\mathrm{c}_2(S)-2}{h},
\end{multline*}
which, by the binomial formula, simplifies to
\begin{multline*}
\int_S\mathrm{c}_1(S)^2 \cdot  A_1(Q,q)(A_4(Q)+1)^{\int_S\mathrm{c}_2(S)-1} \\
+ \Big(\int_S\mathrm{c}_2(S)\mathrm{c}_1(S)\cdot \int_S\mathrm{c}_1(S)-\int_S\mathrm{c}_1(S)^2\Big)  A_3(Q) A_2(Q,q)(A_4(Q)+1)^{\int_S\mathrm{c}_2(S)-2}.
\end{multline*}
Substituting the values of the generating series, we find that the first summand is equal to
\[
-\frac{1}{24} \int_S\mathrm{c}_1(S)^2\, P(Q)^{\int_S\mathrm{c}_2(S)-1}\, (1+\log P(Q))\, T(Q,q),
\]
while the second is equal to
\[
-\frac{1}{24} \Big(\int_S\mathrm{c}_2(S)\mathrm{c}_1(S)\cdot \int_S\mathrm{c}_1(S)-\int_S\mathrm{c}_1(S)^2\Big) P(Q)^{\int_S\mathrm{c}_2(S)-1} \log P(Q)\, T(Q,q).
\]
Summing them, we obtain
\[
-\frac{1}{24} P(Q)^{\int_S\mathrm{c}_2(S)-1}  T(Q,q)\Big( \int_S \mathrm{c}_1(S)^2+\Big(\int_S\mathrm{c}_2(S)\mathrm{c}_1(S)\cdot \int_S\mathrm{c}_1(S)\Big)\log P(Q)\Big),
\]
as desired.
\end{proof}

\end{document}